\crefname{equation}{}{}
\DeclareSymbolFont{symbolsC}{U}{pxsyc}{m}{n}
\DeclareMathSymbol{\medcircle}{\mathbin}{symbolsC}{7}
\crefname{algocf}{Algorithm}{Algorithms}
\crefname{equation}{}{} 
\colorlet{refkey}{orange!20}
\colorlet{labelkey}{blue!30}
\crefname{algocf}{Algorithm}{Algorithms}
\numberwithin{equation}{section}
\newtheorem{theorem}{Theorem}[section]
\newtheorem{lemma}[theorem]{Lemma}
\crefname{claim}{Claim}{Claims}
\newtheorem*{question*}{Question}
\theoremstyle{definition}
\newtheorem{definition}[theorem]{Definition}
\newtheorem*{definition*}{Definition}
\theoremstyle{remark}
\newtheorem{remark}[theorem]{Remark}
\newcommand{\eps}{\varepsilon}
\newcommand{\mb}{\mathbb}
\let\originalleft\left
\let\originalright\right
\renewcommand{\left}{\mathopen{}\mathclose\bgroup\originalleft}
\renewcommand{\right}{\aftergroup\egroup\originalright}
\title{Enumerating Matroids and Linear Spaces}
\author[Kwan]{Matthew Kwan}
\address{Institute of Science and Technology Austria, 3400 Klosterneuburg, Austria}
\email{matthew.kwan@ist.ac.at}
\author[A2]{Ashwin Sah}
\author[A3]{Mehtaab Sawhney}
\address{Department of Mathematics, Massachusetts Institute of Technology, Cambridge, MA 02139, USA}
\email{\{asah,msawhney\}@mit.edu}
\begin{document}

\maketitle
\begin{abstract}
We show that the number of linear spaces on a set of $n$ points and the number of rank-3 matroids on a ground set of size $n$ are both of the form $(cn+o(n))^{n^2/6}$, where $c=e^{\sqrt 3/2-3}(1+\sqrt 3)/2$. This is the final piece of the puzzle for enumerating fixed-rank matroids at this level of accuracy: there are exact formulas for enumeration of rank-1 and rank-2 matroids, and it was recently proved by van der Hofstad, Pendavingh, and van der Pol that for constant $r\ge 4$ there are $(e^{1-r}n+o(n))^{n^{r-1}/r!}$ rank-$r$ matroids on a ground set of size $n$.
\end{abstract}

\section{Introduction}

\emph{Matroids} (also sometimes known as \emph{combinatorial geometries}) are fundamental objects that abstract the combinatorial properties of linear independence in vector spaces. Specifically, a matroid consists of a ground set $E$ and a collection $\mathcal{I}$ of subsets of $E$ called independent sets\footnote{Instead of defining a matroid by its collection of independent sets, some authors prefer to define a matroid by some other (equivalent) data, such as its collection of \emph{flats}, its collection of \emph{hyperplanes}, or its \emph{rank function}. See for example \cite{Wel76,Oxl11} for a more thorough introduction to matroids and their various definitions.}. The defining properties of a matroid are that:
\begin{itemize}
\item{the empty set is independent (that is, $\emptyset\in\mathcal{I}$);}
\item{subsets of independent sets are independent (if $A'\subseteq A\subseteq E$
and $A\in\mathcal{I}$, then $A'\in\mathcal{I}$);}
\item{if $A$ and $B$ are independent sets, and $|A|>|B|$,
then an independent set can be constructed by adding an element of
$A\setminus B$ to $B$ (there is $a\in A\backslash B$ such that $B\cup\{ a\} \in\mathcal{I}$).}
\end{itemize}

Observe that any finite set of elements in a vector space (over any
field) naturally gives rise to a matroid, though most matroids do not
arise this way. The \emph{rank} of a matroid is the maximum size of an independent set.

Enumeration of matroids is a classical topic, though the state of our knowledge is rather incomplete. Some early upper and lower bounds on the total number of matroids on a ground set of size $n$ were obtained in the 1970s by Piff and Welsh~\cite{PW71}, Piff~\cite{Pif73} and Knuth~\cite{Knu74}, and these bounds were improved only recently by Bansal, Pendavingh, and van der Pol~\cite{BPP15}. It is also of interest to enumerate matroids of fixed rank: let $m(n,r)$ be the number of rank-$r$ matroids on a ground set of size $n$. It is trivial to see that $m(n,1)=2^n-1$, and it is also possible to prove the exact identity $m(n,2)=b(n + 1)-2^n$, where $b(m)$ is the $m$th Bell number (which counts the number of partitions of an $m$-element set). This identity seems to have been first observed by Acketa~\cite{Ack78}.

For $r\ge 3$, an exact expression for $m(n,r)$ in terms of well-known functions does not seem to be possible\footnote{Though, a lot of computational work has been done for small $n,r$, and there are many conjectures about the relations between the different $m(n,r)$; see for example \cite{Duk04} and the index for matroids on the On-Line Encyclopedia of Integer Sequences~\cite{oeis}.}, but after some exciting recent developments, rather precise asymptotic expressions have become available.
First, Pendavingh and van der Pol~\cite{PP17} observed that (for constant $r\ge 1$) the lower bound $m(n,r)\ge(e^{1-r}n+o(n))^{n^{r-1}/r!}$ follows from Keevash's breakthrough work~\cite{Kee14,Kee18} on existence and enumeration of \emph{combinatorial designs}.
They also proved an upper bound of the form $m(n,r)\le(en+ o(n))^{n^{r-1}/r!}$. Even more recently, van der Hofstad, Pendavingh and van der Pol~\cite{HPP18} closed the gap for all $r\ne 3$, proving that $m(n,r)=(e^{1-r}n+o(n))^{n^{r-1}/r!}$ for constant $r\ge 4$. In the remaining case $r=3$ they were able to prove $m(n,3)\le(ne^{1+\beta}+o(n))^{n^{2}/6}\approx (1.4n)^{n^2/6}$, where $-0.67<\beta<-0.65$ is the solution to a certain variational problem. In this paper, we close the gap completely in this case $r=3$.

\begin{theorem}\label{thm:matroids}
\[m(n,3)=\Big(\frac{1+\sqrt 3}{2}e^{\sqrt 3/2-3}n+o(n)\Big)^{n^2/6}\approx (0.16169n)^{n^2/6}.\]
\end{theorem}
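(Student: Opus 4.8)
The plan is to pass from matroids to a graph-decomposition problem and then attack the resulting upper and lower bounds by fairly different means. A \emph{simple} rank-$3$ matroid on $[n]$ is exactly a \emph{linear space} on $[n]$ --- a family of ``lines'' of size at least $2$, pairwise sharing at most one point, with every pair of points on exactly one line --- which is in turn the same thing as a partition of the edge set of $K_n$ into cliques, each line becoming the clique on its point set. An arbitrary rank-$3$ matroid is obtained from a simple one by adding loops and blowing points up into parallel classes, which multiplies the count by at most $2^{n}\,b(n)\,n! = 2^{o(n^{2})}$; this is negligible against an answer of size $n^{\Theta(n^{2})}$. Hence it suffices to show that the number $N(n)$ of clique partitions of $K_n$ --- equivalently, the number of linear spaces on $n$ points --- is $(\gamma n + o(n))^{n^{2}/6}$ with $\gamma = \tfrac{1+\sqrt3}{2}e^{\sqrt3/2-3}$. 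I would write $N(n)=\sum_{\mathbf m}N(\mathbf m)$, stratifying by the profile $\mathbf m=(m_j)_{j\ge 3}$ recording how many lines have each size; there are only $e^{O(n)}$ profiles, so it is enough to bound $N(\mathbf m)$ uniformly.

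The structural core is a short variational computation. Building up a clique partition by placing cliques greedily in decreasing order of size gives the crude bound $N(\mathbf m)\le \prod_j (n^{j}/j!)^{m_j}\big/\prod_j m_j!$. Since $\sum_j\binom j2 m_j=\binom n2$ and, by Stirling, the power of $n$ in this bound is essentially $\sum_j (j-2)m_j$, the inequality $\tfrac{j-2}{\binom j2}\le\tfrac13$ --- valid for every integer $j\ge 2$, with equality only for $j\in\{3,4\}$ --- yields $\sum_j (j-2)m_j\le\tfrac13\binom n2$; moreover any $\Omega(n^{2})$ edges lying in lines of size outside $\{3,4\}$ (including size $2$) force a loss of $\Omega(n^{2})$ in this exponent, so the crude bound alone disposes of all such profiles. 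For a profile concentrated on triangles and copies of $K_4$, writing $m_3=z_3 n^{2}$, $m_4=z_4 n^{2}$, we have $z_3+2z_4=\tfrac16+o(1)$, and maximizing $z_3\bigl(1-\log(6z_3)\bigr)+z_4\bigl(1-\log(24z_4)\bigr)$ under this constraint is solved at $z_3=\tfrac{\sqrt3-1}{6}$, $z_4=\tfrac{2-\sqrt3}{12}$ and produces exactly the constant $\gamma$. The catch is that here the crude bound overshoots by a factor $e^{n^{2}/2+o(n^{2})}$: it ignores that the host graph thins out as cliques are deleted, so that a clique of size $j$ deleted when the density is $d$ has only about $d^{\binom j2}n^{j}/j!$ choices rather than $n^{j}/j!$, and $\prod d^{\binom j2}$ telescopes to $e^{-n^{2}/2+o(n^{2})}$.

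For the lower bound I would realize a small perturbation of the extremal profile in two stages. First, choose a packing of $\approx\tfrac{2-\sqrt3}{12}n^{2}$ edge-disjoint copies of $K_4$ in $K_n$, arranged so that the leftover graph $G'$ has density $\approx\sqrt3-1$, is quasirandom, and satisfies the divisibility conditions needed for a triangle decomposition; the number of such packings is $(c_1 n-o(n))^{(2-\sqrt3)n^{2}/12}$ by a routine nibble-and-count argument. Then triangle-decompose $G'$: by the enumeration results of Keevash and of Glock--K\"uhn--Lo--Osthus (in their quantitative form for quasirandom graphs), a quasirandom triangle-divisible graph of density $p$ on $n$ vertices has $\bigl((1+o(1))\,p^{2}n/e^{2}\bigr)^{e(G')/3}$ triangle decompositions. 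Multiplying the two counts and simplifying --- the density-drop factor again telescopes to $e^{-n^{2}/2+o(n^{2})}$ --- recovers $(\gamma n-o(n))^{n^{2}/6}$. I expect this direction to be comparatively routine, modulo care with divisibility.

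The real obstacle is the matching upper bound, that is, recovering in an \emph{upper} bound the factor $e^{-n^{2}/2}$ that the crude count discards. The difficulty is genuine: for an arbitrary sequence of clique deletions the intermediate graph need not be quasirandom --- it can be a lopsided mixture of denser and sparser pieces --- and then it has far more cliques of a given size than a quasirandom graph of the same density, so the naive telescoping fails. The feature I would try to exploit is that a clique partition whose profile is concentrated on $K_3$ and $K_4$ cannot itself be concentrated: peeling off its lines can never expose a large bipartite-like piece, since such a piece could never be finished off using only triangles and $K_4$'s, so the lines must be spread out over the vertex set. The plan is to convert this spreadness into genuine quasirandomness of the graphs encountered along a suitably chosen (partially randomized) order of revealing the lines, apply the quasirandom counting lemma to bound the number of $K_j$'s available at each step by $(1+o(1))d^{\binom j2}n^{j}/j!$, and telescope $\prod_k d_k^{\binom{j_k}{2}}\le e^{-n^{2}/2+o(n^{2})}$ as in the heuristic. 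Turning ``spread out'' into a robust quasirandomness statement that holds for all relevant profiles and controls the rare bad orderings is where the bulk of the argument lies, and is precisely the point at which this approach departs from the entropy-method bound of van der Hofstad, Pendavingh and van der Pol that only gave the weaker constant $e^{1+\beta}$.
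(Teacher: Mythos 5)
Your high-level plan coincides with the paper's: reduce to counting clique decompositions of $K_n$, stratify by profile, run the variational computation to locate the $K_3/K_4$ optimum, and recover the $e^{-n^2/2 + o(n^2)}$ telescoping factor in the upper bound by showing that the intermediate leftover graphs along a randomized ordering of the cliques are quasirandom. The lower bound and the matroid-to-linear-space reduction are both fine (the paper instead cites the Pendavingh--van der Pol bound $m(n,r)\le p(n,r)^{1+O(1/n)}$, and its lower bound concatenates a $K_4$-removal process with a $K_3$-removal process, leaving a few $2$-cliques, which avoids invoking Keevash's enumeration machinery and divisibility bookkeeping; your route via enumeration of triangle decompositions would also work and would even give proper linear spaces, but is heavier).

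The gap is in the paragraph you flag yourself as the ``real obstacle.'' Your proposed mechanism --- that a decomposition whose profile is concentrated on $K_3$ and $K_4$ is forced to be ``spread out,'' and that this structural spreadness yields quasirandomness of the intermediate graphs --- is not the right explanation, and I don't think it can be made to work in that form. The constraint that the leftover graph must still admit a $K_3/K_4$-decomposition is weak: such leftover graphs can be very lopsided (e.g.\ a vertex of full degree while others have small degree), so quasirandomness of specific orderings genuinely fails, and nothing about the profile rules this out. The observation that actually closes the gap (the paper's Lemma~\ref{lem:random-order}, generalizing a lemma of Kwan for Latin squares) is simpler and makes no use of the profile at all: for \emph{any} fixed decomposition $P$ of $K_n$ into cliques of size at most $L=O(1)$, if you remove a uniformly random subset of $m$ of the cliques, the resulting graph is $(n^{-\Omega(1)},L)$-typical with high probability. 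This is a direct concentration argument: condition on $P$, replace the uniform $m$-subset by a binomial one, observe that for a fixed vertex $v$ and set $A$ the codegree $|\bigcap_{w\in A}N(w)|$ is a function of which of the $O(n)$ cliques touching $A$ are removed, each with bounded influence (by edge-disjointness and bounded clique size), and apply Azuma. So the ``bulk of the argument'' you anticipate is in fact a half-page lemma; the point you are missing is that randomness of the ordering, together with edge-disjointness and boundedness, already forces typicality --- no structural consequence of the $K_3/K_4$-heavy profile is needed, and indeed one wants a statement uniform over all profiles since the stratification is optimized only afterwards.
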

In particular, \cref{thm:matroids} shows that the lower bound $m(n,3)\ge(e^{-2}n+o(n))^{n^2/6}$ obtainable from Keevash's results is far from sharp. This confirms a conjecture in \cite{HPP18} (and disproves the earlier \cite[Conjecture 8.2.9]{Pol17}).

In fact, \cref{thm:matroids} is really a corollary of the following theorem, estimating the number of \emph{linear spaces} on a set of $n$ points. In incidence geometry, a linear space on a point set $P$ is a collection of subsets of at least two points of $P$ (called \emph{lines}) such that each pair of points lies in a unique line (see for example \cite{BB93,Shu11} for more on linear spaces). For reasons that will become clear in a moment, we denote the number of linear spaces on a set of $n$ points by $p(n,3)$.
\begin{theorem}\label{thm:partitions}
\[p(n,3)=\Big(\frac{1+\sqrt 3}{2}e^{\sqrt 3/2-3}n+o(n)\Big)^{n^2/6}\approx (0.16169n)^{n^2/6}.\]
\end{theorem}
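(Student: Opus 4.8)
The plan begins by reinterpreting a linear space on $n$ points as a partition of the edge set of $K_n$ into cliques: a line on $k$ points is a copy of $K_k$, and the requirement that every pair lie on a unique line is exactly that these cliques partition $E(K_n)$ (lines of size $2$, i.e.\ single edges, being allowed). Thus $p(n,3)$ is the number of clique decompositions of $K_n$, and \cref{thm:matroids} will follow from this count because a rank-$3$ matroid is a linear space together with a choice of parallel classes and loops, which only contributes a factor $e^{o(n^2)}$.

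The constant $c$ is pinned down by a variational computation. If a linear space has $x_k$ lines of size $k$, then $\sum_k x_k\binom{k}{2}=\binom{n}{2}$. Revealing the lines in a uniformly random order, one expects roughly $\binom{n}{k}(1-\rho)^{\binom{k}{2}}$ choices for a size-$k$ line revealed once a $\rho$-fraction of pairs is covered, so after dividing by $\prod_k x_k!$ to unorder the lines, the number of linear spaces with a prescribed profile should be $\exp\!\big((\sum_k(k-2)x_k)\log n+O(n^2)\big)$. Subject to the constraint, $\sum_k(k-2)x_k$ is maximised precisely by using only lines of size $3$ and $4$ -- the ratio $(k-2)/\binom{k}{2}$ attains its maximum value $\tfrac13$ exactly at $k\in\{3,4\}$ -- which gives the exponent $\tfrac13\binom{n}{2}\sim n^2/6$. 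Restricting to such profiles and optimising over the fraction $\gamma$ of pairs covered by $K_4$'s, the $O(n^2)$ term refines to $\tfrac{n^2}{6}\log c(\gamma)$ for an explicit function $c(\gamma)$ whose maximum over $\gamma\in[0,1]$ is attained at $\gamma^\ast=2-\sqrt3$ with value $\log\!\big(\tfrac{1+\sqrt3}{2}e^{\sqrt3/2-3}\big)$. This identifies $c$ and shows that extremal linear spaces should have almost every pair on a line of size $3$ or $4$, with $K_4$'s covering a $(2-\sqrt3)$-fraction of the pairs.

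For the lower bound I would realise this prediction by a semi-random (``nibble'') construction in the style of \cite{Kee14,Kee18,HPP18}: repeatedly add a uniformly random available triangle or $K_4$ (one all of whose pairs are still uncovered), choosing between the two types at the relative rate that makes $K_4$'s cover a $(2-\sqrt3+o(1))$-fraction of all pairs, and complete the partition with an absorption step. The number of linear spaces so produced is bounded below by the entropy method, exactly as in the enumeration of Steiner triple systems and $K_r$-designs, and the variational calculation shows this count is $(cn+o(n))^{n^2/6}$. It is essential that the split of the pairs between the ``$K_4$-part'' and the ``triangle-part'' be made dynamically: if one instead fixes a partition $E(K_n)=G\sqcup\overline G$ in advance and multiplies the number of $K_4$-decompositions of $G$ by the number of triangle-decompositions of $\overline G$, the corresponding optimisation is maximised at $G=\emptyset$ and recovers only the weaker Steiner-triple-system bound $(e^{-2}n+o(n))^{n^2/6}$.

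The upper bound is the heart of the matter and is where the paper's new technique enters. The goal is to run a revealing process and apply, at each step, the bound of $\binom{n}{k}(1-\rho)^{\binom{k}{2}}$ choices for a size-$k$ line, after which the variational computation closes the argument. The trouble -- and the reason the entropy-based argument of \cite{HPP18} gives only $c\approx 1.4$ -- is that this per-step estimate fails badly when the current uncovered graph is far from quasirandom: a locally dense uncovered subgraph (at the extreme, a clique $K_m$ with $m=o(n)$) admits many more cliques per edge than a quasirandom graph of the same density. The main obstacle is therefore to reduce to the quasirandom case, via a dichotomy: at each stage, either the uncovered graph is quasirandom, so the naive per-step count is valid, or its edge distribution is skewed, in which case the skew is charged against the count -- a surplus of cliques in a dense region is outweighed by the deficit elsewhere together with the global rigidity forced by the linear-space condition. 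Making this trade-off quantitative, uniformly over all profiles and all possible evolutions of the uncovered graph, and then summing over the $e^{o(n^2)}$ relevant profiles, is where the real difficulty lies; I expect this quasirandomness bookkeeping to be the most delicate part of the proof.
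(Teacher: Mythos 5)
Your reduction to counting clique-decompositions of $K_n$, and your variational heuristic (optimizing $\sum_k(k-2)x_k$ subject to $\sum_k\binom{k}{2}x_k=\binom{n}{2}$, maximized at $k\in\{3,4\}$, then tuning the 3/4 balance $\gamma^\ast=2-\sqrt 3$) are exactly right and match the paper's Lemma on optimisation. Your lower-bound plan is also essentially the paper's: it concatenates a $K_4$-removal process and a $K_3$-removal process (allowing leftover edges to serve as 2-lines, which avoids any absorption step — your proposal of absorption would only be needed for \emph{proper} linear spaces, discussed as a remark), and your observation that the split between $K_4$- and $K_3$-covered pairs must be chosen jointly rather than via a pre-fixed partition $E(K_n)=G\sqcup\overline G$ is correct and appears as a footnote in the paper.

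The genuine gap is in the upper bound, and unfortunately it is precisely where the paper's new idea lives. Your plan is to reveal lines one by one and, ``at each stage, either the uncovered graph is quasirandom, so the naive per-step count is valid, or its edge distribution is skewed, in which case the skew is charged against the count.'' This dichotomy is not the paper's argument and there is no obvious way to make it work: the ``surplus of cliques outweighed by deficit elsewhere'' trade-off is not a local inequality, and formulating a clean charging scheme valid uniformly over all evolutions seems at least as hard as the original problem. The paper's actual mechanism sidesteps the worst-case non-quasirandom scenario entirely by reversing the quantifiers. One counts \emph{ordered} clique-decompositions and divides by $N!$ at the end; the crucial lemma (a variant of a lemma from \cite{Kwa20}) is that for any \emph{fixed} clique-decomposition into bounded-size cliques, a \emph{uniformly random ordering} of its cliques makes every intermediate uncovered graph $(\eps,L)$-typical with probability $1-o(1)$, simply by Azuma and a union bound. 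So one never needs to argue that bad uncovered graphs are penalised: for almost every ordered decomposition the naive per-step count is valid, and that is enough. This is the replacement for the entropy method and is the new contribution. You also need, and do not mention, the crude encoding lemma reducing to lines of size at most a constant $L$ (without it, lines of size $\Omega(n)$ would ruin both the union bound over profiles and the quasirandomness argument, since a single giant clique destroys typicality); this reduction is elementary but essential. As written, your upper bound is an accurate diagnosis of where the difficulty lies, and a correct identification that quasirandomness must be the lever, but it does not contain the idea — randomising the order of a fixed decomposition — that actually closes the argument.
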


We remark that one may also be interested in linear spaces in which no line has exactly 2 points (these are called \emph{proper} linear spaces). It should be possible to adapt our proof to show that the expression in \cref{thm:partitions} is also a valid estimate for the number of proper linear spaces on a set of $n$ points (though this would require some rather deep machinery due to Keevash~\cite{Kee14} and McKay and Wormald~\cite{MW90}). See \cref{rem:keevash} for discussion.

To explain the connection between \cref{thm:partitions,thm:matroids} we need to make a few more definitions. A \emph{$d$-partition} (or \emph{generalised partition of type $d$}) of a ground set $E$ is a collection of subsets of $E$ (called \emph{parts}) each having size at least $d$, such that every subset of $d$ elements of $E$ is contained in exactly one of the parts. So, a 1-partition is an ordinary partition, and a 2-partition is a linear space. For any $r\ge 2$, there is a correspondence between the set of $(r-1)$-partitions of $E$ and the set of so-called \emph{paving matroids} of rank $r$ on the ground set $E$. Namely, a paving matroid of rank $r$ is a matroid for which its set of \emph{hyperplanes} (maximal subsets with rank $r-1$) form an $(r-1)$-partition of its ground set. See for example \cite[Section~3]{Wel76} for more details.

For $r\ge 2$ let $p(n,r)$ be the number of paving matroids of rank $r$, or equivalently the number of $(r-1)$-partitions, on a ground set of size $n$. Given the above correspondence, we trivially have $p(n,r)\le m(n,r)$, and it was proved by Pendavingh and van der Pol~\cite[Theorem~3]{PP17} that $p(n,r)\le m(n,r)\le p(n,r)^{1+O(1/n)}$ for constant $r$. So \cref{thm:matroids} is a direct consequence of \cref{thm:partitions}, and for the rest of the paper we will abandon the language of matroids and focus on \cref{thm:partitions}.

In fact, we find it convenient to use the language of graph theory: note that a linear space on a set of $n$ points is precisely equivalent to a \emph{clique-decomposition} of the complete graph $K_n$ (meaning, a decomposition of the edges of $K_n$ into nonempty cliques of arbitrary sizes).

\subsection{Discussion of proof techniques}
If one is interested in counting the number of decompositions of $K_n$ into cliques which each have a \emph{fixed} number of vertices $k$, this is a problem about enumerating \emph{combinatorial designs}. Specifically, such a decomposition corresponds exactly to a design called a \emph{$(2,k,n)$-Steiner system}. Such Steiner systems can be enumerated using powerful tools due to Keevash~\cite{Kee14,Kee18}: in particular, if $n$ satisfies certain necessary divisibility conditions, the number of such Steiner systems can be written as $(c_kn+o(n))^{\alpha_k n^2}$, where $c_k^{k-2}=e^{1-\binom k2}/(k-2)!$ and $\alpha_k=(k-2)/(k(k-1))$. Note that $\alpha_k$ is maximised for two different $k$: namely, when $k=3$ and when $k=4$. This suggests that decompositions containing mostly 3-cliques and 4-cliques comprise the bulk of the clique-decompositions in $p(n,3)$.

The above observation motivates our proof strategy, and we believe it also explains why counting $(r-1)$-partitions and rank-$r$ matroids is most difficult when $r=3$ (if $r\ge 4$, then one can do a similar calculation for \emph{hypergraph} clique-decompositions and see that there is a single maximising value of $k$).

For the lower bound in \cref{thm:partitions} (namely, that there are \emph{at least} about $(0.16 n)^{n^2/6}$ clique-decompositions), we proceed in a very similar fashion as in \cite{Kee18}: we consider a random process that builds a clique-decomposition by iteratively removing random 3-cliques and 4-cliques from $K_n$ (with a particular carefully chosen ratio between the two), until a very small number of edges remain (these edges are then treated as 2-cliques in our decomposition). We then study the number of possible outcomes of this process\footnote{It would be possible to consider a random process that, at each step, randomly decides whether to remove a 3-clique or 4-clique, with an appropriate probability. This would be in close correspondence with the upper bound approach described below. However, it is more convenient for us to reuse existing analysis of clique removal processes, and consider the concatenation of a 4-clique removal process and a 3-clique removal process.}. The details of the lower bound appear in \cref{sec:lower}.

The upper bound is more interesting. In \cite{Kee18}, Keevash is able to upper-bound the number of Steiner systems by adapting an approach of Linial and Luria~\cite{LL13}, using the so-called \emph{entropy method}. Roughly speaking, the idea is as follows. To prove an upper bound on the number of $k$-clique decompositions of $K_n$, it suffices to prove an upper bound on the entropy of a uniformly random $k$-clique decomposition $P$. In order to specify an outcome of $P$, it suffices to specify, for each edge $e\in K_n$, the clique $C_e$ containing $e$. Therefore, one can upper-bound the entropy of $P$ by considering an ordering $e_1,\dots,e_{\binom n2}$ of the edges of $K_n$, and upper-bounding the conditional entropy of each $C_{e_i}$, given the previous cliques $C_{e_1},\dots,C_{e_{i-1}}$. If $e_1,\dots,e_{\binom n 2}$ is a \emph{random} ordering, then it is possible to upper-bound these conditional entropies by studying the expected number of possible choices for $C_{e_i}$ given $C_{e_1},\dots,C_{e_{i-1}}$. This is possible due to a certain symmetry of $k$-clique decompositions: namely, Keevash makes crucial use of the fact that in \emph{any} $k$-clique decomposition, for any edge $e$ and almost all $k$-cliques $C\subseteq K_n$ containing $e$ there are exactly $(\binom k2-1)^2$ edges $e'\notin C$ such that $C_{e'}$ and $C$ share an edge (meaning that after $C_{e'}$ is revealed, $C$ can be ruled out as a possible outcome of $C_e$).

For decompositions of $K_n$ into cliques of mixed sizes, an analogous symmetry property does not hold, and the number of edges $e'$ whose clique $C_{e'}$ intersects a particular clique $C$ depends on the structure of our clique-partition. So, we cannot prove the upper bound in \cref{thm:partitions} by a straightforward generalisation of Keevash's proof. Instead, we exploit a different symmetry property of clique-decompositions, generalising an observation in \cite{Kwa20}, as follows. Suppose $P$ is a clique-partition into cliques of bounded size (say, each of the cliques in $P$ has at most 100 vertices). Then, if we take the union of a \emph{random} subset of the cliques in $P$, where each clique is included independently with probability $p\in (0,1)$, we are very likely to arrive at a \emph{quasirandom} graph with density about $p$ (i.e., a graph whose ``local statistics'' resemble a random subgraph of $K_n$ obtained by including each edge with probability $p$ independently). Sweeping some details under the rug, this means that we can give an upper bound on the number of ways to choose a clique-decomposition with a prescribed number of cliques of each size (the precise statement is in \cref{lem:prescribed-count}), by counting in a clique-by-clique manner, where at each step the number of choices for a $k$-clique is roughly the expected number of $k$-cliques in a random graph of the appropriate density\footnote{In our actual proof, due to the fact that we are considering decompositions into cliques of mixed sizes, it is more convenient to consider small ``chunks'' of cliques with a representative number of cliques of each size, and estimate the number of choices for each chunk, rather than estimating the number of choices for each clique individually.}. We remark that our approach seems to be more flexible than the entropy method, for problems of this type: it is possible to recover all of Keevash's upper bounds in this way (though with slightly weaker quantitative aspects). Also, in our view, our clique-by-clique approach is more naturally in correspondence with the clique-by-clique processes used to prove lower bounds in this area.

Finally, having an upper bound for the number of clique partitions with a prescribed number $s_k$ of $k$-cliques for each $k\le 100$ (and no cliques with more than 100 vertices), it remains to show that the contribution from cliques with more than 100 vertices is negligible, and to optimise our formula over choices of $s_1,\dots,s_{100}$. For the former, we use a very crude encoding argument (\cref{lem:reduction}). The latter is a simple calculus exercise (essentially, we use the method of Lagrange multipliers; see \cref{lem:optimisation}). In agreement with the heuristic mentioned earlier, we find that our formula is maximised when only $s_3,s_4$ are non-negligible.

\subsection{Notation}
We use standard asymptotic notation throughout, as follows. For functions
$f=f(n)$ and $g=g(n)$, we write $f=O(g)$
to mean that there is a constant $C$ such that $|f|\le C|g|$,
$f=\Omega(g)$ to mean that there is a constant $c>0$
such that $f(n)\ge c|g(n)|$ for sufficiently large $n$, and $f=o(g)$ to mean that $f/g\to0$ as $n\to\infty$.
Also, following \cite{Kee14}, the notation $f=1\pm\varepsilon$ means
$1-\varepsilon\le f\le1+\varepsilon$.

We write $N_G(v)$ to denote the neighbourhood of a vertex $v$ in a graph $G$ (i.e., the set of vertices adjacent to $v$). For a real number $x$, the floor and ceiling functions are denoted $\lfloor x\rfloor=\max\{i\in \mb Z:i\le x\}$ and $\lceil x\rceil =\min\{i\in\mb Z:i\ge x\}$. We will however mostly omit floor and ceiling symbols and assume large numbers are integers, wherever divisibility considerations are not important. Finally, all logarithms in this paper are in base $e$.

\subsection*{Acknowledgements}
We thank Michael Simkin for helpful comments on the manuscript. We thank Zach Hunter for several corrections. Sah and Sawhney were supported by NSF Graduate Research Fellowship Program DGE-1745302. Sah was supported by the PD Soros Fellowship.

\section{The upper bound}\label{sec:upper}
\subsection{Removing the contribution from large parts}
We first reduce to the case where all cliques have bounded size. Related ideas appeared in \cite{HPP18}.

\begin{lemma}\label{lem:reduction}
Fix $L\ge 11$ and $n$ sufficiently large as a function of $L$. Let $\Gamma_{s_2,\ldots,s_L;E}$ denote the set of clique-decompositions of $K_n$, for which there are $E$ edges covered by cliques with more than $L$ vertices, and there are $s_k$ cliques with $k$ vertices for each $2\le k\le L$. Then
\[|\Gamma_{s_2,\ldots,s_L;E}|\le n^{|E|/5}|\Gamma_{s_2 + E,s_3,\ldots,s_L;0}|.\]
\end{lemma}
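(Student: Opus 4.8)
\emph{Proof idea.} The plan is to define a ``crushing'' map $\phi\colon\Gamma_{s_2,\ldots,s_L;E}\to\Gamma_{s_2+E,s_3,\ldots,s_L;0}$ and bound the size of each of its fibres. Given a clique-decomposition $D\in\Gamma_{s_2,\ldots,s_L;E}$, let $\phi(D)$ be the clique-decomposition obtained from $D$ by replacing each clique $C$ with more than $L$ vertices by its $\binom{|C|}{2}$ edges, each now regarded as a $2$-clique. The cliques of $D$ with more than $L$ vertices are pairwise edge-disjoint and together cover exactly $E$ edges, so $\phi(D)$ has exactly $s_2+E$ cliques on $2$ vertices and no edges covered by cliques with more than $L$ vertices; and since a clique on more than $L$ vertices is not a $k$-clique for any $k\le L$, and $2$-cliques do not affect $k$-clique counts for $k\ge3$, the number of $k$-cliques for each $3\le k\le L$ is unchanged. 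Hence $\phi(D)\in\Gamma_{s_2+E,s_3,\ldots,s_L;0}$. It therefore suffices to show that $|\phi^{-1}(D')|\le n^{E/5}$ for every $D'$, since then $|\Gamma_{s_2,\ldots,s_L;E}|=\sum_{D'}|\phi^{-1}(D')|\le n^{E/5}\,|\Gamma_{s_2+E,s_3,\ldots,s_L;0}|$.

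To bound a fibre, fix $D'$ in the image of $\phi$. The key observation is that any $D\in\phi^{-1}(D')$ is determined by $D'$ together with the collection $\mathcal{C}$ of cliques of $D$ that have more than $L$ vertices: since $\phi$ crushes precisely these cliques (and since in $D$ no $2$-clique has its edge inside a clique on more than $L$ vertices), one recovers $D$ from $D'$ by deleting every $2$-clique of $D'$ whose edge lies in some member of $\mathcal{C}$ and inserting the cliques of $\mathcal{C}$ in their place. Consequently the map $D\mapsto\mathcal{C}$ is injective on $\phi^{-1}(D')$, so it is enough to bound the number of possible $\mathcal{C}$. I would encode such a $\mathcal{C}=\{C_1,\ldots,C_t\}$ by the string over the alphabet $[n]$ obtained by listing, for $i=1,\ldots,t$ in turn, the number $|C_i|\in\{L+1,\ldots,n\}$ followed by the vertices of $C_i$ in increasing order; this string has length $\sum_i(|C_i|+1)$, and $\mathcal{C}$ can be parsed back from it unambiguously, so the number of possible $\mathcal{C}$ is at most the number of strings over $[n]$ of length at most $\sum_i(|C_i|+1)$.

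It remains to bound $\sum_i(|C_i|+1)$. Since each $C_i$ has more than $L\ge11$ vertices we have $\binom{|C_i|}{2}=\tfrac12|C_i|(|C_i|-1)\ge\tfrac{11}{2}|C_i|$ and also $\binom{|C_i|}{2}\ge\binom{12}{2}=66$; as the $C_i$ are edge-disjoint with $\sum_i\binom{|C_i|}{2}=E$, this yields $\sum_i|C_i|\le\tfrac{2E}{11}$ and $t\le\tfrac{E}{66}$, hence $\sum_i(|C_i|+1)\le\tfrac{2E}{11}+\tfrac{E}{66}=\tfrac{13E}{66}<\tfrac E5$. The number of strings over $[n]$ of length at most $\tfrac{13E}{66}$ is at most $2\,n^{13E/66}$; if $E>0$ then $E\ge66$ (a nonempty fibre forces a clique on at least $12$ vertices), so $2\,n^{13E/66}\le n^{1/5}\cdot n^{13E/66}\le n^{E/5}$ for $n$ large, while for $E=0$ the claimed inequality is trivial. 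This gives $|\phi^{-1}(D')|\le n^{E/5}$, completing the proof.

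The argument is entirely elementary, and I do not anticipate a genuine obstacle; the only point requiring attention is the arithmetic in the last paragraph. In particular, the hypothesis $L\ge11$ is used exactly to ensure $\binom{|C_i|}{2}\ge\tfrac{11}{2}|C_i|$, so that the total encoding length $\tfrac{13E}{66}$ sits below $\tfrac E5$ with enough slack to absorb the lower-order factor from the number of short strings.
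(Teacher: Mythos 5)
Your proposal is correct and follows essentially the same strategy as the paper: crush each clique on more than $L$ vertices into $2$-cliques, observe that this lands in $\Gamma_{s_2+E,s_3,\ldots,s_L;0}$, and bound the size of each fibre by a crude encoding of the large cliques using the fact that a clique on more than $L\ge 11$ vertices covers at least $\frac{11}{2}$ times as many edges as it has vertices. The only difference is in the bookkeeping for the encoding: the paper first bounds the number of possible multisets of clique sizes by $2^{|E|-1}$ (via integer compositions) and then bounds the choices of vertex sets by $n^{\Sigma}$ with $\Sigma\le 2|E|/L$, whereas you fold both into a single string over $[n]$ of length at most $\tfrac{13E}{66}$; your version is a shade tighter (it works once $n\ge 32$ rather than $n\ge 2^{55}$), but this is immaterial given the ``$n$ sufficiently large'' hypothesis.
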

\begin{proof}
Fix a clique decomposition $P\in \Gamma_{s_2,\ldots,s_L,E}$. Let $P_1$ be the ``truncated'' clique decomposition obtained from $P$ by first removing each clique with more than $L$ vertices, and then adding two-vertex cliques (i.e., single edges) for each of the edges which are no longer covered by a clique. Then $P$ is uniquely determined by the pair $(P_1,P_2)$, where $P_2$ contains all the cliques in $P$ with more than $L$ vertices.

There are at most $2^{|E|-1}$ ways to choose a sequence $s_{L+1},\dots,s_n$ such that $\sum_{t=L+1}^{n} \binom t 2 s_t=|E|$. Indeed, such a sequence can be interpreted as an integer partition\footnote{The number of partitions of an integer $N$ is at most its number of compositions, which is $2^{N-1}$.} of $|E|$ (where we are only allowed to use parts which have size of the form $\binom{t}{2}$ for $t > L$). For each such $s_{L+1},\dots,s_n$, the number of possibilities for $P_2$ which contain exactly $s_t$ cliques of each size $t> L$ is at most $\prod_{t=L+1}^{n}(n^t)^{s_t}=n^\Sigma$, where \[\Sigma=\sum_{t=L+1}^{n} t s_t\le \frac 2{L}\sum_{t=L+1}^{n} \binom t 2 s_t=\frac{2|E|}{L}.\]
Then, we observe that $2^{|E|-1}n^{\Sigma}\le n^{|E|/5}$ for $L\ge 11$ and $n$ large enough.
\end{proof}

\subsection{Counting decompositions into prescribed numbers of bounded-size cliques}
We now estimate the number of clique-decompositions with prescribed numbers of cliques of each (bounded) size. We will later optimise over choices of these prescribed numbers.
\begin{lemma}\label{lem:prescribed-count}
Fix a constant $L\in \mb N$ and integers $s_2,\dots,s_L$ such that $\binom 2 2s_2+\dots+\binom L 2 s_L=\binom n 2$. Let $\Gamma_{s_2,\dots,s_L}$ be the set of all clique-decompositions of $K_n$ whose number of $t$-cliques is $s_t$ for each $t$. Then
\[\Gamma_{s_2,\dots,s_L}\le \exp\bigg(\sum_{k=2}^L s_k\bigg(\log \binom n k-\log s_k+1\bigg)-\binom n 2\pm n^{2-\Omega(1)}\bigg).\]
\end{lemma}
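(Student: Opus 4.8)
The plan is to bound $|\Gamma_{s_2,\dots,s_L}|$ by a clique-by-clique (or rather chunk-by-chunk) revealing argument, where at each step the number of available cliques of a given size is controlled by the quasirandomness of the partially-revealed graph. Concretely, suppose $P$ is a uniformly random element of $\Gamma_{s_2,\dots,s_L}$. We group the cliques of $P$ into $T$ consecutive ``chunks'', each chunk containing roughly a $1/T$ fraction of the cliques of each size $k$ (for $T$ a large constant, or slowly growing); after $j$ chunks have been revealed, the revealed edges form a graph $G_j$ which is the union of a ``random-looking'' collection of bounded-size cliques. The key structural input — generalising the observation from \cite{Kwa20} alluded to in the introduction — is that the complement of $G_j$ (the not-yet-covered edges) is \emph{quasirandom} with density approximately $q_j := 1 - j/T$, with overwhelming probability over the choice of $P$ and the random grouping into chunks. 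Granting this, the number of $k$-cliques available to be placed in chunk $j+1$ is, up to a $(1\pm o(1))$ factor, the expected number of $k$-cliques in a quasirandom graph of density $q_j$ on $n$ vertices, namely $(1\pm o(1)) q_j^{\binom k2}\binom nk$.

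From here the counting is bookkeeping. The number of ways to choose chunk $j+1$ is at most the number of ways to choose an appropriate multiset of roughly $s_k/T$ cliques of each size $k$ from the available pool, which by the above is at most $\prod_k \binom{(1+o(1))q_j^{\binom k2}\binom nk}{s_k/T}$; applying the crude bound $\binom ab \le (ea/b)^b$ turns this into $\prod_k \exp\big((s_k/T)(\log(q_j^{\binom k2}\binom nk) - \log(s_k/T) + 1 + o(1))\big)$. Multiplying over all $T$ chunks, the terms $\log\binom nk - \log(s_k/T)+1$ telescope to $T\log\binom nk - T\log(s_k/T) + T = T(\log\binom nk - \log s_k + \log T + 1)$, so the factor of $T$ comes out and we get $\sum_k s_k(\log\binom nk - \log s_k + 1)$ — \emph{except} for the extra contribution $\sum_j \sum_k (s_k/T)\binom k2 \log q_j$. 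Here we use the divisibility constraint $\sum_k \binom k2 s_k = \binom n2$: the sum becomes $\binom n2 \cdot \frac1T\sum_{j=0}^{T-1}\log q_j$, and $\frac1T\sum_{j=0}^{T-1}\log(1-j/T) \to \int_0^1 \log(1-x)\,dx = -1$ as $T\to\infty$. So this term contributes $-\binom n2(1+o(1))$, giving exactly the $-\binom n2$ in the statement. The various $o(1)$ and $\log T$ errors, summed against the $s_k \le O(n^2)$ weights, must be shown to fit inside the $n^{2-\Omega(1)}$ slack; this forces $T$ to be taken as a suitable small power of $n$, or a large constant followed by a limiting argument, and requires the quasirandomness statement to hold with a quantitatively strong (super-polynomially small) failure probability.

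The main obstacle is establishing the quasirandomness of $G_j$ (equivalently of its complement) with a good enough error rate, uniformly over all $j$ and with high probability over the random chunking — and doing so in a way that is genuinely usable, i.e., that lets us count $k$-cliques in the leftover graph up to a $1\pm o(1)$ factor. The difficulty is that $G_j$ is not a random graph: it is a union of cliques from a \emph{worst-case} decomposition $P$, and only the \emph{choice of which cliques go into the first $j$ chunks} is random. So one must argue that taking a random $(j/T)$-fraction of the parts of \emph{any} bounded-clique decomposition yields a quasirandom graph — this is where the bounded clique size (at most $L$) is essential, since it bounds the variance contribution of each part and lets a second-moment or concentration argument for, say, codegrees and small-subgraph counts go through. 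Making the quasirandomness quantitatively strong enough (to survive union bounds over $j$ and over the $n^{O(1)}$ relevant local statistics, and to be converted into a $1\pm n^{-\Omega(1)}$ control on clique counts) is the technical heart of the argument; everything after that is the elementary calculation sketched above, together with care that the chunks can actually be taken to be balanced (each containing $s_k/T \pm O(1)$ cliques of size $k$) and that rounding $s_k/T$ to an integer costs only lower-order terms.
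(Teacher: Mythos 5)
Your high-level strategy is the same as the paper's: a chunk-by-chunk count where the number of available $k$-cliques after $j$ chunks is controlled by quasirandomness of the uncovered graph (the paper's Lemma~2.3 proves exactly this, via an Azuma argument applied to a random subset of the cliques of a \emph{fixed} decomposition, as you anticipate), and the divisibility constraint $\sum_k\binom k2 s_k=\binom n2$ converts $\frac1T\sum_j\log q_j\to\int_0^1\log(1-x)\,dx=-1$ into the required $-\binom n2$. The choice of $T$ as a small power of $n$ ($T=n^c$ with $c=1/(10L^2)$) is also what the paper uses.

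However, there is a genuine gap in the bookkeeping. Your per-chunk bound $\prod_k\binom{(1+o(1))q_j^{\binom k2}\binom nk}{s_k/T}$, after $\binom ab\le(ea/b)^b$ and multiplying over $j=0,\dots,T-1$, gives the exponent
\[
\sum_k s_k\Bigl(\log\tbinom nk-\log s_k+\log T+1\Bigr)+\binom n2\cdot\frac1T\sum_{j=0}^{T-1}\log q_j + o(n^2),
\]
not what you write: the $\log T$ term does not ``come out,'' and you cannot treat $\sum_k s_k\log T$ as part of the $n^{2-\Omega(1)}$ slack. Indeed $\sum_k s_k=N\ge\binom n2/\binom L2=\Omega(n^2)$, so $N\log T=\Omega(n^2)$ already for constant $T$, and $\Omega(n^2\log n)$ when $T=n^c$ — both far larger than $n^{2-\Omega(1)}$. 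The source of the extra term is that your chunk-by-chunk count actually enumerates pairs $(P,\text{chunking of }P)$, and each $P\in\Gamma_{s_2,\dots,s_L}$ admits roughly $\prod_k\frac{s_k!}{((s_k/T)!)^T}\approx\prod_k T^{s_k}$ balanced chunkings, almost all of which are quasirandom. You must divide by this multiplicity; doing so cancels $\sum_k s_k\log T$ exactly. The paper implements this division by counting \emph{ordered} clique-decompositions in $\Xi_{s_2,\dots,s_L}$ — the multinomial coefficient $\binom{t_2+\cdots+t_L}{t_2,\dots,t_L}$ handles interleaving within a chunk — and then dividing by $N!$ at the very end. Without some version of this division, your bound is off by a factor of $\exp(\Theta(n^2\log n))$ and the argument does not close.
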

We define an \emph{ordered} clique-decomposition of $K_n$ to be an ordered list of cliques whose edge-disjoint union is equal to $K_n$. Let $\Xi_{s_2,\dots,s_L}$ be the set of all orderings of clique-decompositions in $\Gamma_{s_2,\dots,s_L}$. First, we need the following modification of \cite[Lemma~2.6]{Kwa20}, showing that for initial segments of a random ordered clique-decomposition, the graph of uncovered edges is ``typical''/``quasirandom''.

\begin{lemma}\label{lem:random-order}
Fix a constant $L\in \mb N$ and any integers $s_2,\dots,s_L$ such that $\binom 2 2s_2+\dots+\binom L 2 s_L=\binom n 2$. Consider a uniformly random ordered clique-decomposition (of $K_n$) from $\Xi_{s_2,\dots,s_L}$ (which has $N:=s_2+\dots+s_L$ cliques). Let $G_m$ be the random subgraph of $K_n$ consisting of the edges not appearing in the first $m$ cliques of our random ordered clique-decomposition. Then with probability $1-o_{n\to\infty}(1)$, for all $0\le m\le N$ and all sets of vertices $A$ with $|A|\le L$, we have
\[\bigg|\bigg|\bigcap_{w\in A}N_{G_m}(w)\bigg|-(1-m/N)^{|A|}n\bigg|\le n^{1/2}\log n.\]
\end{lemma}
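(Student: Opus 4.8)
The plan is to prove this via a second-moment / martingale concentration argument applied along the random ordered clique-decomposition, following the structure of \cite[Lemma~2.6]{Kwa20} but adapted to the mixed-clique-size setting. For a fixed vertex set $A$ with $|A|\le L$, let $Y_m=|\bigcap_{w\in A}N_{G_m}(w)|$ count the common neighbours of $A$ among the edges surviving after the first $m$ cliques are removed. The first step is to understand how $Y_m$ evolves in one step: removing the $(m+1)$st clique $C$ destroys the edge from $v$ to a vertex $w\in A$ precisely when $\{v,w\}\in C$, so a vertex $v$ leaves the common-neighbourhood count iff $C$ contains $v$ together with at least one vertex of $A$. The key structural fact is that, because the ordering of the clique-decomposition is uniformly random, the $(m+1)$st clique is a uniformly random clique among the $N-m$ not-yet-removed cliques, and more usefully the edge it covers at each position is, in an appropriate exchangeable sense, spread uniformly over the surviving edges — this is exactly the mechanism that lets one compute $\E[Y_{m+1}\mid G_m]$ and see that $(1-m/N)^{|A|}n$ is the right trajectory.

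The second step is to set up a suitable exposure martingale or a direct one-step-difference analysis. I would consider the Doob martingale of $Y_m$ with respect to the filtration generated by the first $m$ cliques, and show that a single clique removal changes $Y_m$ by $O(L^2)=O(1)$ (since a bounded-size clique touches only boundedly many edges, hence can eliminate only boundedly many common neighbours of $A$), so the increments are bounded. Then the Azuma–Hoeffding inequality over the at most $N\le\binom n2$ steps gives concentration of $Y_m$ around its mean with a tail of the form $\exp(-c t^2/n^2)$ at deviation $t$ — wait, that is too weak; the right move is to observe that the total number of \emph{relevant} steps (cliques that actually touch a vertex of $A$) is only $O(n)$, since those cliques are edge-disjoint and each uses at least one of the $\le\binom L2$ edges incident to... no — each such clique contains a vertex of $A$, and the cliques are edge-disjoint, so the number of cliques containing any fixed vertex of $A$ is at most $n-1$; summing over $A$ gives $O(|A|n)=O(n)$ relevant cliques. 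Applying Azuma with $O(n)$ bounded-increment steps yields a tail $\exp(-ct^2/n)$, so a deviation of $t=n^{1/2}\log n$ fails with probability $\exp(-c\log^2 n)=n^{-\omega(1)}$.

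The third step is to handle the \emph{mean}: one must show $\E[Y_m]=(1-m/N)^{|A|}n\pm o(n^{1/2}\log n)$, or at least within the allowed error, for all $m$ simultaneously. This follows by analysing the one-step conditional expectation $\E[Y_{m+1}-Y_m\mid \mathcal F_m]$ and showing it is approximately $-\frac{|A|}{N-m}Y_m$ up to lower-order corrections coming from the possibility that a removed clique meets $A$ in two or more vertices (a rare event contributing only $O(1/n)$ relatively), then solving the resulting approximate recursion; the product $\prod_{j<m}(1-|A|/(N-j)) \approx (1-m/N)^{|A|}$ up to the claimed error. Alternatively, and more cleanly, one can compute $\E[Y_m]$ directly by linearity: for each $v\notin A$, $v\in\bigcap_{w\in A}N_{G_m}(w)$ iff none of the $|A|$ edges $\{v,w\}$, $w\in A$, lies in one of the first $m$ cliques, and by exchangeability of the clique ordering the probability that a fixed set of $|A|$ specified edges all avoid the first $m$ cliques is, up to negligible corrections, $(1-m/N)^{|A|}$ (here one uses that these edges lie in $|A|$ distinct cliques of the decomposition with overwhelming probability, and that the positions of those cliques in a uniform random order are nearly independent). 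The final step is a union bound over the $\binom n{\le L}=n^{O(1)}$ choices of $A$ and the $N+1\le n^2$ choices of $m$, which is absorbed by the $n^{-\omega(1)}$ failure probability.

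The main obstacle I expect is the mean computation, specifically controlling the correlations when the $|A|$ edges $\{v,w\}$ do \emph{not} all lie in distinct cliques of the decomposition, or when reasoning about the joint distribution of the positions (in the uniform random order) of the cliques covering those edges — here one wants an estimate that is uniform over all decompositions in $\Xi_{s_2,\dots,s_L}$, not just a typical one, so it is really a statement about uniformly random orderings of an \emph{arbitrary} fixed decomposition, which keeps the argument elementary (a hypergeometric-type computation) but requires care to get the error term down to $O(n^{1/2}\log n)$ simultaneously for all $m$. The bounded-increment concentration step, by contrast, should be routine once the $O(n)$ bound on relevant steps is in hand.
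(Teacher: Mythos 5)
Your proposal is correct and takes essentially the same approach as the paper: condition on a fixed decomposition $P$, compute $\E[Y_m]$ by linearity (the $O(1)$ atypical vertices lying in a clique that meets $A$ in two or more points contribute only an $O(1)$ additive error), and concentrate via Azuma--Hoeffding using the fact that $Y_m$ depends only on which of the $O(n)$ edge-disjoint cliques of $P$ meeting $A$ are among the first $m$. The one implementation difference is that the paper first replaces the uniform random $m$-subset of removed cliques by a binomial random subset (each clique removed independently with the appropriate probability), so the exposure martingale has independent increments and Azuma applies immediately, and then transfers back to the uniform subset via Pittel's inequality; your direct Doob-martingale formulation along the ordering can be made to work but needs the mild extra bookkeeping of a without-replacement version of the bounded-differences inequality, which the binomialization is designed to sidestep.
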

\begin{proof}
Fix a particular choice of $m$ and $A$; we will take a union bound over all such choices. It suffices to consider a uniformly random ordering of a \emph{fixed} clique-decomposition $P\in \Gamma_{s_2,\dots,s_L}$ (i.e., we prove the desired statement conditioned on any outcome of the unordered set of cliques in our random ordered clique-decomposition).

The first $m$ cliques in our random ordering comprise a uniformly random subset $R\subseteq P$ of $m$ cliques in $P$. Consider the closely related ``binomial'' random subset $R'\subseteq P$, where each clique is included with probability $1-m/N$ independently; let $G_m'$ contain the edges of $K_n$ which do not appear in any of the cliques in $R'$.

Since the cliques in $P$ are edge-disjoint, note that there are at most $\binom{|A|}{2}=O(1)$ cliques in $P$ that include more than one vertex in $A$. Let $U$ be the set of vertices in these atypical cliques. Now, for each $v\notin A$ and $w\in A$ there is exactly one clique $e_{v}^{w}$ in $P$ containing $v$ and $w$, whose presence in $R'$ would prevent $v$ from appearing in $\bigcap_{v\in A}N_{G_m'}(v)$. For each fixed $v\notin U$ the hyperedges $e_{v}^{w}$, for $w\in A$, are distinct, so
\[\Pr\bigg(v\in\bigcap_{w\in A}N_{G_m'}(w)\bigg)=(1-m/N)^{|A|}.\]
Let $Q=|\bigcap_{w\in A}N_{G_m'}(w)|$; it follows that $\mb E Q=(1-m/N)^{|A|}n+O(1)$.

Now let $I$ be the set of cliques of $P$ which contain a vertex of $A$. We have $|I| = O(n)$ since the cliques in $P$ are edge-disjoint. Note that $Q$ is entirely determined by $I\cap R'$, and adding or removing any clique from $R'$ affects $Q$ by at most $L-1=O(1)$. So by the Azuma--Hoeffding inequality, for $t\ge 0$ we have
\[\Pr(|Q-\mb E Q|\ge t)\le 2\exp(-\Omega(t^2/n)).\]
It follows that with probability at least $1-n^{-10L}$ we have $|Q-(1-m/N)^{|A|}n|\le \sqrt n \log n$. Recall that we have been considering the ``binomial'' random subset $R'$; we can transfer this result to the ``uniform'' random subset $R$ using a standard inequality (for example, the so-called Pittel inequality; see \cite[p.~17]{JLR00}). Then, we take a union bound over choices of $m$ and $A$.
\end{proof}
We also need the fact that the cliques of different sizes are ``well-distributed'' in a random ordered clique-decomposition.
\begin{lemma}\label{lem:random-order-2}
Fix a constant $L\in \mb N$ and integers $s_2,\dots,s_L$ such that $\binom 2 2s_2+\dots+\binom L 2 s_L=\binom n 2$. Consider a uniformly random ordered clique-decomposition (of $K_n$) from $\Xi_{s_2,\dots,s_L}$ (which has $N:=s_2+\dots+s_L$ cliques). Then with probability $1-o_{n\to\infty}(1)$, for any $0\le m<m'\le N$ and any $0\le k\le L$, if we consider all the cliques ranging from the $(m+1)$-th to the $m'$-th in our random ordered clique-decomposition, the number of such cliques that have exactly $k$ vertices differs from $s_k(m'-m)/N$ by at most $n\log n$.
\end{lemma}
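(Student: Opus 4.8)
The plan is to follow the same template as the proof of \cref{lem:random-order}. I would fix a choice of $m<m'$ and $k$, prove the desired concentration conditionally on a fixed (unordered) clique-decomposition, then pass to a ``binomial'' model, apply a Chernoff-type bound, and finally take a union bound over all relevant triples $(m,m',k)$. The key simplification relative to \cref{lem:random-order} is that the quantity to be controlled is, in the binomial model, a sum of \emph{independent} indicator random variables, so the edge-disjointness of $P$ plays no essential role and no martingale argument is needed.

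In more detail: since $N\le\binom n2$, there are only $O(n^4)$ choices of $(m,m',k)$, so it suffices to bound the failure probability for each fixed choice by $o(n^{-4})$. As in \cref{lem:random-order}, it is enough to prove the statement for a uniformly random ordering of an arbitrary fixed clique-decomposition $P\in\Gamma_{s_2,\dots,s_L}$. The cliques occupying positions $m+1,\dots,m'$ of the random ordering form a uniformly random subset $R\subseteq P$ with $|R|=m'-m$; write $X$ for the number of $k$-cliques in $R$, so the claim is that $|X-s_k(m'-m)/N|\le n\log n$ with high probability. (If $s_k=0$ this is trivial, so assume $s_k\ge 1$.) Consider instead the binomial random subset $R'\subseteq P$ in which each clique of $P$ is retained independently with probability $q:=(m'-m)/N$, and let $X'$ be the number of $k$-cliques in $R'$. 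Then $X'$ is a sum of $s_k$ independent $\mathrm{Bernoulli}(q)$ variables, so $\mb E X'=s_k(m'-m)/N$, and by Hoeffding's inequality
\[
\Pr\big(|X'-\mb E X'|\ge t\big)\le 2\exp\big(-2t^2/s_k\big).
\]
Taking $t=n\log n$ and using $s_k\le\binom n2\le n^2/2$, the right-hand side is at most $2\exp(-4\log^2 n)$, which is smaller than any fixed power of $n$.

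It then remains to transfer this tail bound from the binomial subset $R'$ to the uniform subset $R$, which is exactly the step carried out at the end of the proof of \cref{lem:random-order}: $R$ has the law of $R'$ conditioned on $\{|R'|=m'-m\}$, an event of probability at least inverse-polynomial in $n$, so the Pittel inequality (see \cite[p.~17]{JLR00}) shows that $\Pr(|X-s_k(m'-m)/N|>n\log n)$ is still smaller than any fixed power of $n$. A union bound over the $O(n^4)$ choices of $(m,m',k)$ then finishes the proof. There is no real obstacle here; the only point requiring a little care is the quantitative balance — the natural fluctuation scale of $X'$ is $\sqrt{s_k}=O(n)$, so an error term of order $n\log n$ is precisely what is needed for each individual tail bound to beat the $O(n^4)$ size of the union bound, and no sharper estimate is required.
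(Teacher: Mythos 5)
Your proof is correct and follows essentially the same approach as the paper: reduce to a uniformly random ordering of a fixed decomposition $P$, observe that the count of $k$-cliques among positions $m+1,\dots,m'$ is hypergeometric, apply a Chernoff-type tail bound, and union-bound over the $O(n^4)$ choices of $(m,m',k)$. The only cosmetic difference is that the paper invokes the hypergeometric concentration inequality of \cite[(2.5) and Theorem~2.10]{JLR00} directly, whereas you re-derive it by passing through the binomial model and Pittel's inequality; both routes give superpolynomial tail decay, which comfortably beats the polynomially many choices in the union bound.
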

\begin{proof}
As in the proof of \cref{lem:random-order}, it suffices to consider a uniformly random reordering of a \emph{fixed} clique-decomposition $P\in \Xi_{s_2,\dots,s_L}$. The desired result then follows from a concentration inequality for the hypergeometric distribution (see for example \cite[(2.5) and Theorem~2.10]{JLR00}) and the union bound.
\end{proof}

Now we are ready to prove \cref{lem:prescribed-count}.

\begin{proof}[Proof of \cref{lem:prescribed-count}]
Let $N=s_2+\dots+s_L$, and let $c$ be a very small constant ($c=1/(10L^2)$ will do). We will count \emph{ordered} clique decompositions in $\Xi_{s_2,\dots,s_L}$, and then at the end of the proof we will divide by $N!$.

Partition the interval $\{1,\dots,N\}$ into sub-intervals $I_1,\dots,I_{n^c}$ by taking \[I_i=\{1,\dots,N\}\cap ((i-1)Nn^{-c},iNn^{-c}].\]
Let $m_i=\min I_i=\lfloor(i-1)Nn^{-c}+1\rfloor$ be the first index in each $I_i$. Say that an ordered clique decomposition $P\in \Xi_{s_2,\dots,s_L}$ is \emph{ordinary} if for each $1\le i\le n^c$, the following hold.
\begin{enumerate}
    \item The graph $G^{(i)}:=G_{m_i-1}$ consisting of those edges not covered by the first $m_i-1$ cliques of $P$ satisfies the conclusion of \cref{lem:random-order}.
    \item For each $1\le i\le n^c$ and $2\le k\le L$, the number of cliques ranging from the $m_i$-th to the $(m_{i+1}-1)$-th which have exactly $k$ vertices satisfies the conclusion of \cref{lem:random-order-2}.
\end{enumerate}
Almost all ordered clique decompositions in $\Xi_{s_2,\dots,s_L}$ are ordinary by \cref{lem:random-order,lem:random-order-2}, so it suffices to prove an upper bound on the number of ordinary decompositions.

For each $1\le i\le n^c$, we consider separately the number of choices for the cliques indexed by indices in $I_i$, for an ordinary ordered clique-decomposition. Let $\gamma_i=1-(i-1)n^{-c}$. Now, (1) implies that for all $k\le L$, the number of $k$-cliques in $G^{(i)}$ is
\begin{equation}\gamma_i^{\binom k 2}n^k/k!+O(n^{k-1/2}\log n).\label{eq:clique-count}\end{equation}
To see this, we count the number of ways to choose an ordered list of $k$ vertices inducing a clique, in a vertex-by-vertex fashion, then divide by $k!$.

For any choice of $t_k=n^{-c}s_k+O(n\log n)$, we have
\[\binom{t_2+\cdots+t_L}{t_2,\ldots,t_L} = e^{O(n(\log n)^2)}\bigg(\prod_{k=2}^L\bigg(\frac{s_k}{s_2+\cdots+s_k}\bigg)^{-\frac{s_k}{s_2+\cdots+s_k}}\bigg)^{n^{-c}(s_2+\cdots+s_k)+O(n\log n)}.\]
So, given (2), we can multiply these estimates to see that the number of ways to choose the cliques indexed by $I_i$ is at most
\[e^{n^{2-c-\Omega(1)}} \prod_{k=2}^L\Big(\frac{s_k}{N}\Big)^{-s_k n^{-c}}\prod_{k=2}^L\Big(\gamma_i^{\binom k 2} \frac{n^k}{k!}\Big)^{n^{-c}s_k}.\]

We next take the product of this expression over all $1\le i\le n^c$, and divide by the number of orderings $N!$ of each clique-decomposition, to obtain the desired result. In particular one obtains 
\begin{align*}
&e^{n^{2-\Omega(1)}}\frac{1}{N!}\prod_{i=1}^{n^c} \bigg(\prod_{k=2}^L\Big(\frac{s_k}{N}\Big)^{-s_k n^{-c}}\prod_{t=2}^L\Big(\gamma_i^{\binom k 2} \frac{n^k}{k!}\Big)^{n^{-c}s_k}\bigg) \\
&= e^{n^{2-\Omega(1)}}\frac{1}{N!}\prod_{k=2}^L\Big(\Big(\frac{s_k}{N}\Big)^{-s_k}\Big(\frac{n^k}{k!}\Big)^{s_k}\Big)\cdot\prod_{i=1}^{n^c} \prod_{t=2}^L\gamma_i ^{n^{-c}\binom{k}{2}s_k}\\
&= e^{n^{2-\Omega(1)}-\binom{n}{2}}\prod_{k=2}^L\Big(\frac{s_k}{e}\Big)^{-s_k}\Big(\frac{n^k}{k!}\Big)^{s_k}.
\end{align*}
We note that this involves an approximation by a Riemann integral:
\begin{align*}
\prod_{i=1}^{n^c}\prod_{k=2}^L\gamma_i^{n^{-c}s_k} = \exp\bigg(n^{-c}\sum_{i=1}^{n^c}\log \gamma_i \sum_{k=2}^L\binom{k}{2}s_k \bigg)&=\exp\bigg(\binom{n}{2}n^{-c}\sum_{i=1}^{n^c} \log(in^{-c}) \bigg)\\
&= \exp\bigg(\binom{n}{2}\bigg(\int_0^1 \log x~dx\pm O(n^{-c/2})\bigg)\bigg)\\
&=\exp\bigg(-\binom{n}{2} + O(n^{2-c/2})\bigg).\qedhere
\end{align*}
\end{proof}

\subsection{Optimising over prescribed clique numbers}
Given \cref{lem:reduction,lem:prescribed-count}, the upper bound in \cref{thm:partitions} will be a simple consequence of the following lemma.
\begin{lemma}\label{lem:optimisation}
Fix a constant $L\in \mb N$, let $D$ be the set of $(s_2,\dots,s_L)\in \mb R^{L-1}$ such that $s_2,\dots,s_L\ge 0$ and $\binom 22s_2+\dots+\binom L2s_L=\binom n2$, and consider the real-valued function $f:D\to\mb R$ defined by
\[f(s_2,\dots,s_L)=\frac{1}{5}s_2\log n+\sum_{k=2}^L\bigg(s_k\log\binom{n}{k}-s_k\log s_k+s_k\bigg)-\binom n 2.\]
Then, the maximum value of $f(s_2,\dots,s_L)$ is
\[\frac{n^2}{6}\bigg(\log n -3 + \frac{\sqrt{3}}{2} + \log\frac{1+\sqrt{3}}{2} \pm n^{-\Omega(1)}\bigg).\]
\end{lemma}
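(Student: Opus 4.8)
The plan is to treat this as a continuous constrained optimization and solve it by Lagrange multipliers, exploiting concavity. Writing $s\log s=0$ at $s=0$, we have
$f(s_2,\dots,s_L)=\sum_{k=2}^L g_k(s_k)-\binom n2$ where $g_k(s)=s\log\frac{e\binom nk}{s}$ for $k\ge3$ and $g_2(s)=s\log\frac{en^{1/5}\binom n2}{s}$, and each $g_k$ is strictly concave on $[0,\infty)$ with $g_k(0)=0$. Since $D$ is compact and $f$ is continuous on $D$, the maximum is attained; I will pin it down exactly.

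For the upper bound, fix $\lambda>0$, put $a_k=\binom nke^{-\lambda\binom k2}$ for $k\ge3$ and $a_2=n^{1/5}\binom n2e^{-\lambda}$, chosen so that $g_k'(a_k)=\lambda\binom k2$ for every $k$. Concavity then gives $g_k(s)\le g_k(a_k)+\lambda\binom k2(s-a_k)$ for all $s\ge0$, so summing over $k$ and using the constraint $\sum_k\binom k2 s_k=\binom n2$ yields $f(s_2,\dots,s_L)\le f(a_2,\dots,a_L)+\lambda\bigl(\binom n2-\sum_k\binom k2a_k\bigr)$ for every feasible point. The map $\lambda\mapsto\sum_k\binom k2a_k$ is continuous, strictly decreasing, ranges over $(0,\infty)$, and exceeds $\binom n2$ at $\lambda=0$, so there is a unique $\lambda^*>0$ making it equal $\binom n2$; for this $\lambda^*$ the inequality reads $f\le f(a_2,\dots,a_L)$ on all of $D$, and $(a_2,\dots,a_L)$ is itself feasible, so $\max_D f=f(a_2,\dots,a_L)$. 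Moreover $g_k(a_k)=a_k\bigl(1+\lambda^*\binom k2\bigr)$ straight from the definitions, so everything reduces to estimating $\lambda^*$ and then substituting.

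Substituting $a$ into the constraint and using $\binom nk=\frac{n^k}{k!}(1+O(1/n))$, the terms with $k=2$ and with $k\ge5$ are negligible: $a_2=\Theta(n^{2+1/5-1/3})$, and for $k\ge5$ one has $a_k=\Theta(n^{k-\binom k2/3})=\Theta(n^{k(7-k)/6})$ with exponent at most $5/3<2$, so their total contribution to $\sum_k\binom k2a_k$ is $O(n^{2-2/15})$. Hence, with $v:=ne^{-3\lambda^*}$ (a priori $\Theta(1)$, by sandwiching the constraint), the constraint collapses to $\tfrac12v^2+v=1+O(n^{-2/15})$, so $v=\sqrt3-1+O(n^{-2/15})$, i.e. $e^{-3\lambda^*}=\frac{\sqrt3-1}{n}(1+O(n^{-2/15}))$ and $3\lambda^*=\log n-\log(\sqrt3-1)+O(n^{-2/15})$. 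Then $a_3=\frac{\sqrt3-1}{6}n^2(1+O(n^{-2/15}))$ and $a_4=\frac{(\sqrt3-1)^2}{24}n^2(1+O(n^{-2/15}))=\frac{2-\sqrt3}{12}n^2(1+O(n^{-2/15}))$, while $g_2(a_2)$ and $g_k(a_k)$ for $k\ge5$ are $n^{2-\Omega(1)}$. Thus $\max_D f=a_3(1+3\lambda^*)+a_4(1+6\lambda^*)-\binom n2\pm n^{2-\Omega(1)}$, and plugging in the values above and collapsing the logarithms via $(\sqrt3-1)(\sqrt3+1)=2$, $(2-\sqrt3)(2+\sqrt3)=1$ and $(\sqrt3+1)^2=2(2+\sqrt3)$ gives exactly $\frac{n^2}{6}\bigl(\log n-3+\frac{\sqrt3}{2}+\log\frac{1+\sqrt3}{2}\bigr)\pm n^{2-\Omega(1)}$, as claimed.

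The main obstacle is the \emph{near-degeneracy} of the problem: $\tfrac{k-2}{\binom k2}=\tfrac13$ for both $k=3$ and $k=4$, so the leading term $\tfrac{n^2}{6}\log n$ is completely insensitive to how the budget $\binom n2$ is split between $3$-cliques and $4$-cliques, and that split is determined only by the stationarity equations — which genuinely reduce to the quadratic $\tfrac12v^2+v=1$ above. Consequently one cannot shortcut the calculation with a one-line efficiency argument; instead one must carry error terms all the way down to relative size $n^{-\Omega(1)}$ (the bottleneck being the $k=2$ term, of relative size $n^{-2/15}$) and separately confirm that the clique sizes $k=2$ and $k\ge5$ are negligible, which in the end is just the arithmetic of $\tfrac15<\tfrac13$ (for $k=2$) and $k-\tfrac16k(k-1)<2$ for $k\ge5$.
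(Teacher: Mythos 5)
Your proof is correct and reaches the same critical point and asymptotics as the paper, but it gets there by a genuinely different route. The paper first argues by a perturbation that the maximum cannot lie on the boundary of $D$ (some $s_k=0$), then invokes first-order stationarity to force a common Lagrange multiplier $\lambda$, and finally uses monotonicity in $\lambda$ to conclude the unique critical point is the maximum. You instead exploit the strict concavity of each $g_k$ directly: the tangent-line bound $g_k(s)\le g_k(a_k)+g_k'(a_k)(s-a_k)$, valid on all of $[0,\infty)$ (including $s=0$, since $g_k(a_k)-a_kg_k'(a_k)=a_k\ge0$), gives a linear upper envelope for $f$ on $D$; choosing $\lambda^*$ so that the tangent point $(a_2,\dots,a_L)$ is itself feasible collapses the bound to equality. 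This is a textbook Lagrangian-duality/weak-duality-is-tight argument, and it is self-contained: it delivers a global upper bound for free, eliminates the boundary analysis entirely, and certifies that the stationary point is the maximizer without appealing to uniqueness of the critical point. The downstream asymptotics — $v=ne^{-3\lambda^*}=\Theta(1)$ by sandwiching, the quadratic $\tfrac12v^2+v=1+O(n^{-2/15})$ from the $k\in\{3,4\}$ terms, the negligibility of $k=2$ (since $\tfrac15<\tfrac13$) and $k\ge5$ (since $k-\tfrac16 k(k-1)<2$), and the final simplification using $g_k(a_k)=a_k(1+\lambda^*\binom k2)$ — all match the paper's calculation. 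The one place worth fleshing out a little is the ``a priori $\Theta(1)$, by sandwiching'' remark: this requires checking that $\sum_k\binom k2 a_k(\lambda)$ overshoots $\binom n2$ when $e^{-3\lambda}\ge Cn^{-1}$ and undershoots when $e^{-3\lambda}\le cn^{-1}$, which is routine but should be stated (the paper does the analogous check by noting $\alpha=-1$ overshoots and $\alpha=1$ undershoots).
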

\begin{proof}
Since $f$ is continuous on $D$ and $D$ is compact, our function $f$ attains a maximum.

First, we claim that a maximum can only be attained when all $s_k$ are strictly positive. Indeed, consider some $(s_1,\dots,s_L)\in D$ for which $s_k=0$, in which case there must be some $s_j>0$. We make a slight perturbation: increase $s_k$ to $\binom{j}{2}\eps$ and decrease $s_j$ by $\binom{k}{2}\eps$, for some very small $\eps > 0$ (note that we are still in $D$), and consider the corresponding change to the value of $f$. Note that the terms containing $s_j$ decrease by $O(\eps)$ but the terms containing $s_k$ increase by $\Omega(\eps\log(1/\eps))$. So, our perturbation has increased the value of $f$, which proves the claim.

Note that if we increase $s_k$ by $\binom{j}{2}\eps$ and decrease $s_j$ by $-\binom{k}{2}\eps$, for some very small $\varepsilon>0$, then the value of $f$ increases by
\[\binom{j}{2}\bigg(\log\binom{n}{k}-\log s_k+\tau(k)\bigg)\eps-\binom{k}{2}\bigg(\log\binom{n}{j}-\log s_j+\tau(j)\bigg)\eps + O(\eps^2),\]
where we set $\tau(k)=(\log n)/5$ when $k=2$, and $\tau(k)=0$ when $k\ne 2$. (This essentially follows from taking a derivative).
So, a maximum can only occur when each
\[\frac{\log\binom{n}{k}-\log s_k+\tau(k)}{\binom{k}{2}}\]
takes a common value $\lambda$. For this $\lambda$, we see that
\begin{equation}s_k = \binom{n}{k}e^{-\binom{k}{2}\lambda+\tau(k)}.\label{eq:sk}\end{equation}
Now, recalling the definition of $D$, we have
\begin{equation}n^{1/5}\binom{n}{2}e^{-\lambda}+\sum_{k=3}^L\binom{k}{2}\binom{n}{k}e^{-\binom{k}{2}\lambda} = \binom{n}{2}.\label{eq:lagrange}\end{equation}
There is a unique $\lambda$ satisfying this equation, because the left-hand side of the equation is monotonically decreasing in $\lambda$. Now, if $\lambda = (\log n)/3 + \alpha$, for $|\alpha|\le 1$, then we may compute
\[n^{1/5}\binom{n}{2}e^{-\lambda}+\sum_{k=3}^L\binom{k}{2}\binom{n}{k}e^{-\binom{k}{2}\lambda}=(e^{-3\alpha}/2+e^{-6\alpha}/4)n^2 \pm n^{2-\Omega(1)}.\]
(The dominant terms of the expression on the left-hand side are the ones with $k\in\{3,4\}$). Therefore, we can write the solution to \cref{eq:lagrange} as $\lambda=(\log n)/3+\alpha$ for $|\alpha|\le 1$ (since $\alpha = -1$ makes the left side of \cref{eq:lagrange} too large and $\alpha = 1$ makes it too small).

Recall that $\binom n 2=(1/2)n^2+O(n)$, so for the $\lambda$ satisfying \cref{eq:lagrange} we have
\[e^{-3\alpha}/2+e^{-6\alpha}/4 = 1/2 + n^{-\Omega(1)},\]
hence the quadratic formula yields
\[e^{-3\alpha} = \sqrt{3}-1+n^{-\Omega(1)}.\]
Thus
\[e^{-3\lambda}=(\sqrt{3}-1)n^{-1}\pm n^{-1-\Omega(1)}.\]
Using \cref{eq:sk}, we then compute that $f$ is maximised when
\begin{align*}
    s_3 &= \frac{n^3}{6}e^{-3\lambda}\pm n^{2-\Omega(1)} = \frac{n^2(\sqrt{3}-1)}{6}\pm n^{2-\Omega(1)},\\
    s_4 &= \frac{n^4}{24}e^{-6\lambda}\pm n^{2-\Omega(1)} = \frac{n^2(2-\sqrt{3})}{12}\pm n^{2-\Omega(1)},
\end{align*}
and $s_k = n^{2-\Omega(1)}$ for $k\notin\{3,4\}$. The desired result follows after substituting into the formula for $f(s_2,\dots,s_L)$ and simplifying.
\end{proof}

\subsection{Deducing the upper bound}
We now give the short deduction of the upper bound in \cref{thm:partitions} using \cref{lem:reduction,lem:prescribed-count,lem:optimisation}.

\begin{proof}[Proof of the upper bound in \cref{thm:partitions}]
Let $L=11$. The sets $\Gamma_{s_2,\dots,s_L;E}$ defined in \cref{lem:reduction} form a partition of the set of all clique-decompositions of $K_n$. There are at most ${\binom n2}^{L}=e^{n^{2-\Omega(1)}}$ choices of $s_2,\dots,s_L,E$, so it suffices to upper-bound the maximum possible value of $|\Gamma_{s_2,\dots,s_L,E}|$. By \cref{lem:reduction} it in fact suffices to upper-bound the maximum possible value of $n^{s_2/5}|\Gamma_{s_2,\dots,s_L,0}|$. This is precisely what is accomplished by \cref{lem:prescribed-count,lem:optimisation}.
\end{proof}

\section{The lower bound}\label{sec:lower}
The lower bound in \cref{thm:partitions} is an immediate consequence of the following estimate.
\begin{lemma}\label{lem:lower-bound}
Let
\[s_3= \Big\lfloor\frac{n^2(\sqrt{3}-1)}{6}\Big\rfloor,\qquad s_4 = \Big\lfloor\frac{n^2(2-\sqrt{3})}{12}\Big\rfloor.\]
For $c>0$, let $\Gamma_c$ be the collection of clique-decompositions of $K_n$ in which there are $s_3-\lfloor n^{2-c}\rfloor$ cliques with 3 vertices, $s_4$ cliques with 4 vertices, and the rest are cliques with 2 vertices. If $c>0$ is sufficiently small then
\[|\Gamma_c|\ge\Big(\frac{1+\sqrt 3}{2}e^{\sqrt 3/2-3}n-o(n)\Big)^{n^2/6}.\]
\end{lemma}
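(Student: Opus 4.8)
The plan is to produce the required clique-decompositions by a two-phase random greedy process, exactly in the spirit of Keevash's analysis of clique-removal processes, and then to count the number of distinct outcomes via an entropy/counting argument. In the first phase, starting from $K_n$, we iteratively remove random $4$-cliques (chosen uniformly among all $4$-cliques present in the current graph), continuing for exactly $s_4$ steps; in the second phase, we switch to removing random $3$-cliques, continuing for exactly $s_3 - \lfloor n^{2-c}\rfloor$ steps. By the choice of $s_3,s_4$, the total number of edges removed is $6s_4 + 3(s_3-\lfloor n^{2-c}\rfloor) = \binom n2 - O(n^{2-c})$, so at the end a sparse graph $H$ with $O(n^{2-c})$ edges remains; we declare each of its edges to be a $2$-clique, producing a decomposition in $\Gamma_c$. (One must check the arithmetic so that $6s_4 + 3 s_3 \le \binom n2$ and the leftover edge count is exactly absorbed by adjusting the $3$-clique phase length; this is a routine computation with the given floors, and the $\lfloor n^{2-c}\rfloor$ slack is precisely there to provide room.)

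The key input is that such clique-removal processes are well understood: by the results on the $H$-removal process (for $H=K_4$ and $H=K_3$; see Keevash~\cite{Kee18}, or Bennett--Bohman-type analyses), with probability $1-o(1)$ the process does not get stuck before the prescribed number of steps, and moreover the graph stays \emph{quasirandom} throughout — after $m$ steps of the $K_r$-process the remaining graph has edge density $(1-m/M_r)^{?}$-type behaviour and the number of available $r$-cliques at step $m$ is concentrated around an explicit deterministic trajectory. Concretely, writing the density after the $4$-clique phase and partway through the $3$-clique phase in terms of the number of removed cliques, the number of choices at step $m$ is $\bigl(1\pm o(1)\bigr)$ times a deterministic quantity $q_m$, and the product $\prod_m q_m$ over both phases, divided by the symmetry factor $s_3! \, s_4!$ for reordering the removed cliques, is exactly $\exp\!\bigl(s_3(\log\binom n3 - \log s_3 + 1) + s_4(\log\binom n4 - \log s_4 + 1) - \binom n2 + o(n^2)\bigr)$. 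Substituting the values $s_3 = n^2(\sqrt3-1)/6$, $s_4 = n^2(2-\sqrt3)/12$ and simplifying (this is the same algebra that appears at the end of the proof of Lemma~\ref{lem:optimisation}, run in reverse) yields the claimed bound $\bigl(\frac{1+\sqrt3}{2}e^{\sqrt3/2-3}n - o(n)\bigr)^{n^2/6}$.

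To convert "expected number of outcomes" into "number of distinct outcomes" cleanly, I would run the standard argument: let $X$ be the random ordered sequence of cliques produced (conditioned on the likely event that the process succeeds), and bound the number of distinct \emph{unordered} decompositions from below by the entropy of $X$ divided by the $\log$ of the maximum multiplicity, or more simply, observe that each ordered outcome arises with probability $\prod_m 1/(\text{number of available cliques at step }m) \le \prod_m 1/(q_m(1-o(1)))$, so the number of ordered outcomes is at least $(1-o(1))\prod_m q_m$; dividing by $s_3!\,s_4!$ (an overcount of the number of orderings of a fixed unordered decomposition) gives the lower bound on $|\Gamma_c|$. A mild subtlety is that a given unordered decomposition in $\Gamma_c$ need not be reachable with all $s_4!\,s_3!$ orderings respecting the two-phase structure, but since every $4$-clique is removed before every $3$-clique in our process, the number of process-orderings of a fixed decomposition is \emph{at most} $s_4!\,s_3!$, which is all we need for the lower bound.

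The main obstacle is marshalling the precise quasirandomness and non-stalling guarantees for the \emph{concatenation} of a $K_4$-removal process and a $K_3$-removal process: one needs that the graph left after the $4$-clique phase is quasirandom enough (with the right density, $1 - 6s_4/\binom n2 = (3-\sqrt3)/3$ up to $o(1)$) to serve as a valid starting point for the analysis of the $3$-clique phase, and that concentration holds uniformly over all $\sim n^2$ steps of both phases simultaneously. This is exactly the kind of statement established in~\cite{Kee18} (and this is why the paper's introduction says the lower bound "proceed[s] in a very similar fashion as in~\cite{Kee18}"); I would quote the relevant trajectory/quasirandomness lemma as a black box rather than reprove it, and the remaining work — verifying the edge-count bookkeeping, the Riemann-sum evaluation of $\sum_m \log q_m$, and the final substitution of $s_3,s_4$ — is routine.
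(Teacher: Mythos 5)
Your proposal is correct and follows essentially the same route as the paper's proof: a concatenated $K_4$-removal then $K_3$-removal process, quasirandomness/typicality guarantees (cited as black boxes from Keevash/Kwan-style removal-process analyses) ensuring the process rarely stalls, an upper bound on the probability of each ordered outcome to lower-bound the number of outcomes, division by $s_4!\,s_3!$, and a Riemann-sum evaluation followed by substitution of $s_3,s_4$. The one place you add explicit care — noting that a fixed unordered decomposition corresponds to \emph{at most} $s_4!\,s_3!$ process-orderings, which suffices for a lower bound — is implicit in the paper and correctly resolved.
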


To prove \cref{lem:lower-bound} we need a notion of ``typicality'' (called ``quasirandomness'' in \cite{Kwa20}), closely related to the property in \cref{lem:random-order}.
\begin{definition}\label{def:typical}
For an $n$-vertex, $m$-edge graph $G$, we define its \emph{density} $p(G)=m/\binom n2$. We say that $G$ is \emph{$(\varepsilon,h)$-typical} if for every set $A$ of at most $h$ vertices of $G$, the vertices in $A$ have $(1\pm \varepsilon)p(G)^{|A|}n$ common neighbours.
\end{definition}
Note that if an $n$-vertex graph $G$ with density $p$ is $(\varepsilon,h)$-typical then it has
\[(1\pm O_k(\varepsilon))p^{\binom k 2}n^k/k!\]
$k$-cliques, for any $k\le h$ (as for \cref{eq:clique-count}, we count cliques vertex-by-vertex; this calculation also appears explicitly in \cite[Proposition~2.8]{Kwa20}).

\begin{proof}
Given a graph $G$ and an integer $k$, we define its \emph{$K_k$-removal process} as follows. Starting from the graph $G$, at each step we consider the set of all copies of $K_k$ in our graph, choose one uniformly at random, and remove its edges. (Eventually we will run out of copies of $K_k$, at which point the process aborts).

We will need the following facts about the behaviour of the $K_3$-removal process and the $K_4$-removal process.
\begin{enumerate}
    \item There is $a>0$ such that the following holds. If we run the $K_4$-removal process on $K_n$, then with probability $1-o(1)$:
    \begin{enumerate}
        \item the process does not abort before $s_4$ steps, and
        \item for each $t\le s_4$, the graph at step $t$ is $(n^{-a},3)$-typical.
    \end{enumerate}
    \item For every $a>0$ there is $c>0$ such that the following holds. Let $G$ be an $n$-vertex graph with $m:=\binom n2-6s_4=(3-O(1/n))s_3$ edges which is $(n^{-a},2)$-typical. If we run the $K_3$-removal process on $G$, then with probability $1-o(1)$:
    \begin{enumerate}
        \item the process does not abort before $s_3-n^{2-c}$ steps, and
        \item for each $t\le s_3-n^{2-c}$, the graph at step $t$ is $(n^{-c},2)$-typical.
    \end{enumerate}
\end{enumerate}

For a simple proof of Fact (2), see \cite[Theorem~4.1]{Kwa20}. Fact (1) can be proved in basically exactly the same way (in fact it is slightly simpler, because we start from the complete graph instead of a general typical graph). See \cite[Section~6]{Kee18} for some discussion of (a generalisation of) the $K_k$-removal process starting from a complete graph, which implies the desired result.

Now, we simply concatenate the $K_4$-removal process and the $K_3$-removal process. Indeed, starting from the complete graph $K_n$, we first run $s_4$ steps of the $K_4$-removal process, then $s_3-n^{2-c}$ steps of the $K_3$ removal process. In this way, either we abort or we produce a clique decomposition in $\Gamma_c$, in which our set of 4-cliques and our set of 3-cliques are both equipped with an ordering. Let $Q$ be the set of outcomes of our concatenated process for which in each of the first $s_4$ steps, our graph is $(n^{-b},3)$-typical, and in each of the next $s_3-n^{2-c}$ steps, our graph is $(n^{-c},2)$-typical.

The probability of each outcome in $Q$ is at most
\begin{equation}\prod_{t=1}^{s_4} \bigg((1-n^{-\Omega(1)})\bigg(\frac{\binom n 2-6t}{\binom n2}\bigg)^6\frac{n^4}{24}\bigg)^{-1}\prod_{t=1}^{s_3-n^{2-c}} \bigg((1-n^{-\Omega(1)})\bigg(\frac{\binom n 2-6s_4-3t}{\binom n2}\bigg)^3\frac{n^3}{6}\bigg)^{-1},\label{eq:count-outcomes}\end{equation}
and by (1--2) above, these probabilities sum up to $1-o(1)$. So, the number of outcomes in $Q$ is at least $1-o(1)$ divided by the expression in \cref{eq:count-outcomes}. It follows that
\[|\Gamma_c|\ge \Big(\frac{n^4}{24}\Big)^{s_4}\Big(\frac{n^3}{6}\Big)^{s_3}\exp\bigg(6\sum_{t=1}^{s_4} \log\bigg(\frac{\binom n 2-6t}{\binom n2}\bigg)+3\sum_{t=1}^{s_3} \log \bigg(\frac{\binom n 2-6s_4-3t}{\binom n2}\bigg)-n^{2-\Omega(1)}\bigg)/(s_4!s_3!).\]
(The difference between taking a sum up to $s_3$ and up to $s_3-n^{2-c}$ is easily seen to contribute to the negligible $\exp(-n^{2-\Omega(1)})$ factor.)

Let $a_3 = 3s_3/\binom{n}{2}$ and $a_4 = 6s_4/\binom{n}{2}$, and note that $a_3+a_4 = 1-O(1/n)$. By Stirling's approximation we compute that $\log |\Gamma_c|$ is at least
\begin{align*}
&\sum_{k=3}^{4}s_k\bigg(\log\binom{n}{k}-\log s_k + 1\bigg)+6\sum_{t=1}^{s_4} \log\bigg(\frac{\binom n 2-6t}{\binom n2}\bigg)+3\sum_{t=1}^{s_3} \log \bigg(\frac{\binom n 2-6s_4-3t}{\binom n2}\bigg) - n^{2-\Omega(1)}\\
&\qquad= \sum_{k=3}^{4}s_k\bigg(\log\binom{n}{k}-\log s_k + 1\bigg)\\
&\qquad\qquad\qquad+6\binom{n}{2}\int_{0}^{a_4/6}\log(1-6t)~dt+3\binom{n}{2}\int_{0}^{a_3/3} \log (1-a_4-3t)~dt - n^{2-\Omega(1)}\\
&\qquad= \sum_{k=3}^{4}s_k\bigg(\log\binom{n}{k}-\log s_k + 1\bigg)+\binom{n}{2}\int_{0}^{a_3+a_4}\log(1-t)~dt- n^{2-\Omega(1)}\\
&\qquad= \sum_{k=3}^{4}s_k\bigg(\log\binom{n}{k}-\log s_k + 1\bigg)-\binom{n}{2}- n^{2-\Omega(1)}.
\end{align*}
Substituting the values of $s_3$ and $s_4$ and simplifying (or alternatively, comparing with the expressions in the proof of \cref{lem:optimisation}) yields the desired result.
\end{proof}

\begin{remark}\label{rem:keevash}
In \cref{lem:lower-bound} we consider clique-decompositions that have a small number of ``trivial'' cliques with two vertices. We believe that it is possible to adapt the proof to avoid such cliques, but this requires  some of Keevash's deepest results on clique-decompositions of quasirandom graphs. Namely, for any constant $k$, Keevash's machinery~\cite{Kee18} allows one to estimate the number of $K_k$-decompositions of any dense quasirandom graph satisfying certain divisibility conditions (the number of edges should be divisible by $\binom k2$ and every degree should be divisible by $k-1$; say such a graph is \emph{$K_k$-divisible}). So, in order to prove a version of \cref{lem:lower-bound} in which no clique has exactly two vertices (thereby proving a version of \cref{thm:partitions} for proper linear spaces), it suffices to prove a suitable lower bound on the number of ways to partition the edges of $K_n$ into a $K_3$-divisible quasirandom graph with density $(\sqrt 3-1)/2+o(1)$, a $K_4$-divisible quasirandom graph with density $(2-\sqrt 3)/2+o(1)$ and a tiny ``remainder graph'' with $O(1)$ edges, itself decomposable into cliques which have more than two vertices. A suitable lower bound on the number of such graph partitions can be proved with some elementary number theory and the machinery of McKay and Wormald~\cite{MW90} for enumerating graphs with a given dense degree sequence (the remainder graph is just to handle divisibility issues, and it turns out we can always choose it to be either a copy of $K_5$, a copy of $K_7$, or a vertex-disjoint union $K_5\cup K_7$).
\end{remark}

\bibliographystyle{amsplain0.bst}
\bibliography{main.bib}

\providecommand{\bysame}{\leavevmode\hbox to3em{\hrulefill}\thinspace}
\providecommand{\MR}{\relax\ifhmode\unskip\space\fi MR }
\providecommand{\MRhref}[2]{%
  \href{http://www.ams.org/mathscinet-getitem?mr=#1}{#2}
}
\providecommand{\href}[2]{#2}
\begin{thebibliography}{10}

\bibitem{Ack78}
Dragan~M. Acketa, \emph{On the enumeration of matroids of rank {$2$}}, Univ. u
  Novom Sadu Zb. Rad. Prirod.-Mat. Fak. \textbf{8} (1978), 83--90.

\bibitem{BPP15}
Nikhil Bansal, Rudi~A. Pendavingh, and Jorn~G. van~der Pol, \emph{On the number
  of matroids}, Combinatorica \textbf{35} (2015), 253--277.

\bibitem{BB93}
Lynn~Margaret Batten and Albrecht Beutelspacher, \emph{The theory of finite
  linear spaces}, Cambridge University Press, Cambridge, 1993.

\bibitem{oeis}
OEIS~Wiki Contributors, \emph{Index to {OEIS}: Matroids, sequences related to},
  \url{http://oeis.org/wiki/Index_to_OEIS:_Section_Mat#matroid}.

\bibitem{Duk04}
W.~M.~B. Dukes, \emph{On the number of matroids on a finite set}, S\'{e}m.
  Lothar. Combin. \textbf{51} (2004/05), Art. B51g, 12.

\bibitem{JLR00}
Svante Janson, Tomasz {\L}uczak, and Andrzej Rucinski, \emph{Random graphs},
  Wiley-Interscience Series in Discrete Mathematics and Optimization,
  Wiley-Interscience, New York, 2000.

\bibitem{Kee14}
Peter Keevash, \emph{The existence of designs}, arXiv:1401.3665.

\bibitem{Kee18}
Peter Keevash, \emph{Counting designs}, J. Eur. Math. Soc. (JEMS) \textbf{20}
  (2018), 903--927.

\bibitem{Knu74}
Donald~E. Knuth, \emph{The asymptotic number of geometries}, J. Combinatorial
  Theory Ser. A \textbf{16} (1974), 398--400.

\bibitem{Kwa20}
Matthew Kwan, \emph{Almost all {S}teiner triple systems have perfect
  matchings}, Proc. Lond. Math. Soc. (3) \textbf{121} (2020), 1468--1495.

\bibitem{LL13}
Nathan Linial and Zur Luria, \emph{An upper bound on the number of {S}teiner
  triple systems}, Random Structures Algorithms \textbf{43} (2013), 399--406.

\bibitem{MW90}
Brendan~D. McKay and Nicholas~C. Wormald, \emph{Asymptotic enumeration by
  degree sequence of graphs of high degree}, European J. Combin. \textbf{11}
  (1990), 565--580.

\bibitem{Oxl11}
James Oxley, \emph{Matroid theory}, second ed., Oxford Graduate Texts in
  Mathematics, vol.~21, Oxford University Press, Oxford, 2011.

\bibitem{PP17}
Rudi Pendavingh and Jorn van~der Pol, \emph{Enumerating matroids of fixed
  rank}, Electron. J. Combin. \textbf{24} (2017), Paper No. 1.8, 28.

\bibitem{Pif73}
M.~J. Piff, \emph{An upper bound for the number of matroids}, J. Combinatorial
  Theory Ser. B \textbf{14} (1973), 241--245.

\bibitem{PW71}
M.~J. Piff and D.~J.~A. Welsh, \emph{The number of combinatorial geometries},
  Bull. London Math. Soc. \textbf{3} (1971), 55--56.

\bibitem{Shu11}
Ernest~E. Shult, \emph{Points and lines}, Universitext, Springer, Heidelberg,
  2011.

\bibitem{HPP18}
Remco van~der Hofstad, Rudi Pendavingh, and Jorn van~der Pol, \emph{The number
  of partial {S}teiner systems and $d$-partitions}.

\bibitem{Pol17}
J.G. {van der Pol}, \emph{Large matroids: enumeration and typical properties},
  Ph.D. thesis, Mathematics and Computer Science, September 2017, Proefschrift.

\bibitem{Wel76}
D.~J.~A. Welsh, \emph{Matroid theory}, L. M. S. Monographs, No. 8, Academic
  Press [Harcourt Brace Jovanovich, Publishers], London-New York, 1976.

\end{thebibliography}

\end{document}